\documentclass[a4paper,12pt,reqno]{amsart}

  \headheight0.6in
  \headsep22pt
  \textheight23cm
  \topmargin-1.7cm
  \oddsidemargin 0.5cm
  \evensidemargin0.5cm
  \textwidth15.3cm

\usepackage{amscd,amssymb,amsmath,amsfonts,amsthm, mathrsfs,xspace}
\usepackage{graphicx}
\usepackage{verbatim,enumerate,paralist}
\usepackage{textcomp}
\usepackage[pagebackref,colorlinks]{hyperref}
\hypersetup{%
  urlcolor=blue,linkcolor=blue,citecolor=blue
}
\usepackage{pdfpages}

\usepackage[T1]{fontenc}
\usepackage[all,2cell,poly,knot,tips]{xy}
\UseAllTwocells
\CompileMatrices

\setlength{\marginparwidth}{2cm}

\newcounter{Definitioncount}
\setcounter{Definitioncount}{0}

\newtheorem{theorem}{Theorem}[section] 

\newtheorem{proposition}[theorem]{Proposition}



\theoremstyle{definition}
\newtheorem{remark}[theorem]{Remark}

\newtheoremstyle{fact}{\bigskipamount}{\medskipamount}{\upshape}{}{\itshape}{. }{ }{Fact}
\theoremstyle{fact}

\newtheoremstyle{genquest}{\bigskipamount}{\medskipamount}{\upshape}{}{\itshape}{. }{ }{General Question}
\theoremstyle{genquest}

\newtheoremstyle{step}{2\bigskipamount}{\medskipamount}{\upshape}{}{\itshape}{. }{ }{\underline{Step~\thestep}}
\theoremstyle{step}

\renewcommand{\thestep}{\arabic{step}}

\newcommand{\lra}{\longrightarrow}

\newcommand{\ldual}[1]{\mathord{{\let\nolimits\relax\sideset{^\wedge}{}{#1}}}}
\newcommand{\laction}[2]{\mathord{{\let\nolimits\relax\sideset{^{#1}}{}{#2}}}}
\newcommand{\conj}[2]{\mathord{{\let\nolimits\relax\sideset{^{#1}}{}{#2}}}}

\def\CA{{\mathscr A}}

\def\CE{{\mathscr E}}

\def\CM{{\mathscr M}}

\DeclareMathOperator{\Skew}{Skew}
\DeclareMathOperator{\Mnd}{Mnd}
\DeclareMathOperator{\LB}{LBrMon}
\DeclareMathOperator{\Span}{Span}

\newcommand{\M}{\ensuremath{\mathscr M}\xspace}

\newcommand{\LBM}{\ensuremath{\LB(\M)}\xspace}

\newcommand{\ox}{\otimes}
\newcommand{\x}{\times}

\newcommand{\op}{\ensuremath{^{\textnormal{op}}}}

\begin{document}

\title{A skew-duoidal Eckmann-Hilton argument and quantum categories}

\author{Stephen Lack}
\address{Department of Mathematics, Macquarie University, NSW 2109, Australia}
\email{steve.lack@mq.edu.au}

\author{Ross Street}
\address{Department of Mathematics, Macquarie University, NSW 2109, Australia}
\email{ross.street@mq.edu.au}

\thanks{Both authors gratefully acknowledge the support of the Australian Research Council Discovery Grant DP1094883; Lack acknowledges with equal gratitude the support of an Australian Research Council Future Fellowship}



\dedicatory{Dedicated to George Janelidze on his sixtieth birthday}

\subjclass[2010]{18D10, 18D05, 16T15, 17B37, 20G42, 81R50}

\keywords{bialgebroid; fusion operator; quantum category; monoidal bicategory; duoidal category; monoidale; duoidale; skew-monoidal category; comonoid; Hopf monad.}

\begin{abstract}
\noindent A general result relating skew monoidal structures and monads is proved. 
This is applied to quantum categories and bialgebroids. 
Ordinary categories are monads in the bicategory whose morphisms are spans between sets. 
Quantum categories were originally defined as monoidal comonads on endomorphism objects in a particular monoidal bicategory $\CM$. 
Then they were shown also to be skew monoidal structures (with an appropriate unit) on objects in $\CM$. 
Now we see in what kind of $\CM$ quantum categories are merely monads. 

\end{abstract}

\maketitle

\section{Introduction}\label{intro}

The proof that higher homotopy groups are commutative was abstracted to the statement that monoids in the category of monoids are commutative monoids. 
This is known as the Eckmann-Hilton argument \cite{EH}.

In a seminar talk \cite{Walters1984}, 
Bob Walters suggested looking at a 2-dimension\-al version of this argument where monoids are replaced by monoidal categories. 
Joyal-Street \cite{BTC} showed that monoidales (= pseudomonoids) in the 2-category of monoidal categories and strong monoidal functors were braided monoidal categories.
They also pointed out that, repeating the process, monoidales in the 2-category of braided monoidal categories and braided strong monoidal functors were symmetric monoidal categories. Also, stabilization occurs at that stage: it is symmetric monoidal categories from there onwards. These facts together constitute the Eckmann-Hilton argument for monoidal categories; here, we shall be particularly interested in the fact that a monoidale in the 2-category of braided monoidal categories is a symmetric monoidal category. 

If in the above strong monoidal functors are replaced bymere (lax)  monoidal functors, no such collapsing or stabilization occurs. 
Monoidales in the 2-category of monoidal categories and monoidal functors are called ``2-monoidal categories'' in \cite{AguiarMahajan} and ``duoidal categories'' in \cite{dlVP} and \cite{Tanduoidal}.    

Recently Kornel Szlach\'anyi \cite{Szl2012} has excited our investigations \cite{SkMon} and \cite{scc} into {\em skew} monoidal categories. These are defined similarly to monoidal categories, except
that the morphisms expressing the associativity and unit laws
are not required to be invertible. The paper \cite{Szl2012} explained
the relationship between skew monoidal categories and bialgebroids; this was extended in \cite{SkMon} to the case of quantum
categories in place of bialgebroids.
The question therefore arises as to whether there might be an  Eckmann-Hilton-like argument in the skew context. Given the title of the paper, it will come as no surprise that this is the case; equally, given the non-invertibility inherent in the notion of skew monoidal category it should come as no suprise that what we have found is rather less tight than is the case for monoidal categories. Our result is Theorem~\ref{thm:EH} below; see also Remark~\ref{rmk:EH} for a discussion of the sense in which it should be seen as an Eckmann-Hilton result. Then in Section~\ref{smbc} we generalize this to the case of internal structures in a symmetric monoidal bicategory \M.

Not that we were led to the above considerations directly! 
We began with our main application to quantum categories.
Since \cite{Ben1967}, we have known that ordinary categories are monads in the bicategory $\Span$ whose morphisms are spans between sets. 
Quantum categories were originally defined in \cite{QCat} as monoidal comonads on endomorphism objects in a particular monoidal bicategory $\CM$ of comonoids and comodules. 
When $\CM$ is $\Span$, these are equivalent to ordinary categories. 
As mentioned in the previous paragraph, quantum categories in $\CM$ 
were shown in \cite{SkMon} also to be equivalent to skew monoidal structures 
(with an appropriate unit) on objects in $\CM$. 

The starting point of the present paper was a question by George Janelidze at the Category Theory Conference CT2009 in Calais, France. 
At the end of the second author's lecture, George asked why the definition of quantum category was so complicated. 
In his own lecture, George suggested studying monads in the bicategory of comonoids and comodules. 
This naturally leads to the question: in what kind of $\CM$ are quantum categories merely monads? 
We shall answer this in Section~\ref{qccc}.

\enlargethispage{\baselineskip}

\section{The categorical level}\label{catlevel}

As mentioned in the introduction, a duoidal (or 2-monoidal) 
category is a monoidale in the monoidal 2-category of monoidal categories,
strong monoidal functors, and monoidal natural transformations.
See any of \cite{AguiarMahajan, BatMarkl, Tanduoidal, dlVP} for 
a more explicit definition. 

Our notation for skew monoidales is to write $(A,i,p)$, where $A$ is the underlying object, $p$ is the multiplication $A\ox A\to A$, and $i$ is the unit $I\to A$. In the case of skew monoidales in {\bf Cat} --- that is, of skew monoidal categories --- the domain of $p$ is the product $A\x A\to A$, and $p$ gives the tensor product of $A$; while the domain of $i$ is the terminal category $1$, and we may identify $i$ with its image, the unit object of $A$. The structure morphisms are invariably called $\alpha$, $\lambda$, and $\rho$, and are omitted from the notation $(A,i,p)$.

A {\em skew duoidal category} $(A,k,m,i,p)$  is a skew monoidale in the 
2-category of skew monoidal categories, opmonoidal functors, and opmonoidal natural transformations.
So we have two skew monoidal categories  $(A,i,p)$ and $(A,k,m)$ such that 
$k\colon 1\lra A$ and $m\colon A\x A \lra A$ and the constraints are opmonoidal 
with respect to $(A,i,p)$.  Apart from the two skew monoidal categories, the extra data involved are four natural tranformations
$$\xymatrix{
AAAA \ar[r]^{mm}_{~}="1" \ar[d]_{1c1} & AA \ar[dd]^p  &
11 \ar[r]^{!}_{~}="3" \ar[d]_{ii} & 1 \ar[d]^{i} & 11 \ar[r]^{kk}_{~}="5" \ar[d]_{!} & AA \ar[d]^{p} & 1 \ar@{=}[d] \ar@{=}[r]_{~}="7" & 1 \ar[d]^{i} \\
AAAA \ar[d]_{pp} && AA \ar[r]_{m}^{~}="4" & A & 1 \ar[r]_{k}^{~}="6" & A & 1 \ar[r]_{k}^{~}="8" & A \\
AA \ar[r]_{m}^{~}="2" & A 
\ar@{=>}"2";"1"_{m_2} 
\ar@{=>}"4";"3"_{m_0} 
\ar@{=>}"6";"5"_{k_2} 
\ar@{=>}"8";"7"_{k_0} 
}$$ 
where we have omitted the tensor product symbol $\ox$ to save space. These natural transformations are 
subject to a long list of conditions which we shall not write out in full, but describe as follows:
\begin{enumerate}
\item there is an associativity condition for $m_2$ which involves the map $\alpha$ associated to $(A,i,p)$;
\item two conditions stating that $m_0$ is a unit for $m_2$, and involving the $\lambda$ and $\rho$ for $(A,i,p)$;
\item an associativity condition for $k_2$, once again involving the $\alpha$ for $(A,i,p)$;
\item two unit conditions for $k_0$ involving the $\lambda$ and $\rho$ for $(A,i,p)$;
\item two conditions stating that the $\alpha$ for $(A,k,m)$ is opmonoidal, one of which involves $m_2$ and the other $m_0$;
\item two conditions stating that the $\lambda$ for $(A,k,m)$ is opmonoidal, one of which involves $m_2$ and $k_2$, the other $m_0$ and $k_0$;
\item and two similar conditions stating that the $\rho$ for $(A,k,m)$ is opmonoidal.
\end{enumerate}

An {\em opmonoidal monad} is a monad in the 2-category of monoidal categories, opmonoidal functors, and opmonoidal natural transformations. We typically write $\eta$ for the unit and $\mu$ for the multiplication of a monad $T$, and we write $T_2$ and $T_0$ for the opmonoidal structure: here $T_0$ consists of a single map $TI\to I$, while $T_2$ consists of a natural family of morphisms $T(A\ox B)\to TA\ox TB$.

We saw in \cite{SkMon} that such an opmonoidal monad $(T,\eta,\mu,T_0,T_2)$ determines a skew monoidal category $(\CA,I,\ast)$, with the same unit $I$, via the formulas
\begin{eqnarray*}
A\ast B = TA\otimes B \ ,
\end{eqnarray*} 
\begin{eqnarray*}
\xymatrix @C3pc {
(A*B)*C \ar@{=}[d] \ar[rr]^{\alpha_{A,B,C}} && A*(B*C) \ar@{=}[d] \\
T(TA\ox B)\ox C \ar[r]^-{v_{A,B}\ox1} & 
(TA\ox TB)\ox C \ar[r]^-{\alpha_{TA,TB,C}} & TA\ox(TB\ox C) 
}
\end{eqnarray*} 
where $v_{A,B}$ is the ``fusion operator''
\begin{eqnarray*}
\xymatrix @C3pc {
T(TA\ox B) \ar[r]^{T_2} & TTA\ox TB \ar[r]^{\mu_A\ox1} & 
TA\ox TB }
\end{eqnarray*}  
and the unit constraints $\lambda_A\colon I*A\to A$ and 
$\rho_A\colon A\to A*I$ are given by  the composites
\begin{eqnarray*}
\xymatrix @R1pc {
I*A \ar@{=}[r] & TI\ox A \ar[r]^{T_0\ox1} & I\ox A \ar[r]^{\lambda_A}
& A \\
A \ar[r]^{\eta_A} & TA \ar[r]^{\rho_{TA}} & TA\ox I \ar@{=}[r] & A*I . }  
\end{eqnarray*}
The extra point to be made here is that, if $(\CA,I,\otimes)$ is lax braided, we obtain a skew duoidal category via the product and unit
maps
$$\xymatrix @R1pc {
(A,I,\ox)\x(A,I,\ox) \ar[r]^-{(*,\gamma)} & (A,I,\ox) \\
1 \ar[r]^{(I,\mu)} & (A,I,\ox) 
}$$
in which the middle-of-four morphism $\gamma$ is given by 
$$\xymatrix @C2pc {
(A\ox C)*(D\ox B) \ar[rr]^{\gamma_{A,C,B,D}} \ar@{=}[d] 
&& (A*D)\ox(C*B) \ar@{=}[d] \\
T(A\ox C)\ox(D\ox B) \ar[r]^-{T_2\ox1} &
(TA\ox TC)\ox (D\ox B) \ar[r]^{\gamma} & 
(TA\ox D)\ox(TC\ox B) }$$
and $\mu\colon I*I\to I$ is given by $T_0$. Here the $\gamma$ appearing at the bottom of the diagram is the middle-of-four morphism arising from the lax braiding on $(\CA,I,\ox)$. 

\begin{theorem}\label{thm:EH}
Let $(\CA,I,\otimes)$ be a lax-braided monoidal category. The assignment just described is an equivalence between opmonoidal monads $(T,\eta,\mu,T_0,T_2)$ on 
$(\CA,I,\otimes)$ and those skew duoidal structures 
$(\CA,I,\ast , I, \otimes)$ with $(\CA,I,\otimes)$ as the second of the two monoidal structures, for which the following composite is invertible.
\begin{eqnarray}\label{skduoidalcond}
\xymatrix{
A*B \ar[r]^-{\rho_A*\lambda^{-1}_B} & 
(A\ox I)*(I\ox B) \ar[r]^-{\gamma} & 
(A*I)\ox(I*B) \ar[r]^-{1\ox\lambda} &
(A*I)\ox B } 
\end{eqnarray}  
\end{theorem}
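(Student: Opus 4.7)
The forward assignment is laid out in the discussion preceding the statement, so what remains is to verify that it lands in the indicated subclass of skew duoidal structures (those satisfying~\eqref{skduoidalcond}), and to construct an essentially inverse assignment.

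\emph{Forward direction.} I would first check that the data $(m_2, m_0, k_2, k_0)$ built from $T_2$, $T_0$, $\mu$, $\eta$, and the lax braiding satisfy the seven groups of axioms in the definition of a skew duoidal category. Each of these should unfold, via the formulas for $\alpha$, $\lambda$, $\rho$ of $(\CA, I, \ast)$ in terms of the opmonoidal monad data recorded just before the statement, into a combination of one opmonoidal-monad axiom and one lax-braiding axiom on $(\CA, I, \otimes)$. For the invertibility in~\eqref{skduoidalcond}: after substituting $A \ast B = TA \otimes B$, $A \ast I = TA \otimes I$, and the explicit formulas for $\gamma$, $\lambda$, and $\rho$, the composite should collapse to an isomorphism built from the unit and associativity constraints of the genuine monoidal structure $(\CA, I, \otimes)$, by a direct chase.

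\emph{Reverse direction.} Suppose $(\CA, I, \ast, I, \otimes)$ is a skew duoidal structure with $(\CA, I, \otimes)$ the prescribed lax-braided monoidal category and with~\eqref{skduoidalcond} invertible. Set $TA := A \ast I$, and write $\phi_{A,B}\colon TA \otimes B \to A \ast B$ for the inverse of~\eqref{skduoidalcond}. Then $\eta_A := \rho_A \colon A \to A \ast I = TA$ supplies the monad unit; the multiplication $\mu_A$ and the map $T_0 \colon TI \to I$ are recovered from the associator and a unit constraint of $(\CA, I, \ast)$ transported across $\phi$; and the opmonoidal comparison $T_2 \colon T(A \otimes B) \to TA \otimes TB$ is assembled from $\gamma_{A,B,I,I}$, the lax braiding (used to shift the $I$-factors into position), and $\phi$.

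\emph{Verifications and main obstacle.} One must then verify that $(T, \eta, \mu)$ is a monad, that $(T, T_0, T_2)$ is opmonoidal, and that the two pieces of structure are compatible: each of these is forced by one of the axiom groups~(1)--(7) together with the opmonoidality of $\alpha$, $\lambda$, $\rho$ for $(\CA, I, \ast)$. The two round trips are naturally isomorphic to the identity essentially by construction, since $TA = A \ast I$ recapitulates the right-unit data of the forward construction and $\phi$ reinstates the formula $A \ast B = TA \otimes B$. I expect the chief difficulty to be the bookkeeping in deducing the coassociativity and counitality of $T_2$ from axiom groups~(1) and~(5), since these expand into six- and eight-fold tensor diagrams that must be rearranged using several instances of the lax braiding, which need not be invertible. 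It is precisely the potential non-invertibility of $\gamma$ that forces the hypothesis~\eqref{skduoidalcond}; without it one could not reconstruct $A \ast B$ from the pair $(TA, B)$ in the reverse direction.
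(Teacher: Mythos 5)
Your proposal matches the paper's own proof in both substance and level of detail: the paper likewise records only the inverse construction $TA=A\ast I$, $\eta_A=\rho_A$, $\mu_A=(1\ast\lambda_I)\circ\alpha$, $T_0=\lambda\colon I\ast I\to I$ and $T_2=\gamma\circ(1\ast\rho_I)$, leaving the axiom checks and the two round-trip comparisons implicit, exactly as you plan them. The only cosmetic differences are that the paper's $\mu$ and $T_2$ need neither $\phi$ nor the braiding (the invertible unit constraint $\rho_I\colon I\to I\ox I$ of $(\CA,I,\ox)$ already positions the $I$-factors), and that the non-invertibility forcing hypothesis~\eqref{skduoidalcond} stems from the skew unit map $\lambda\colon I\ast B\to B$ as much as from $\gamma$.
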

\begin{proof}
Given a skew duoidal category of the form $(\CA,I,\ast , I, \otimes)$ with \eqref{skduoidalcond} invertible, define an endofunctor $T\colon\CA \lra \CA$ by $TA=A\ast I$.
Put $\eta_A$ equal to $\rho_A\colon  A \lra A\ast I=TA$, and put $\mu_A \colon TTA \lra TA$ equal to the composite 
$$\xymatrix{
(A*I)*I \ar[r]^{\alpha} & A*(I*I) \ar[r]^-{1*\lambda_I} & A*I .}$$
This defines a monad $(T,\eta ,\mu )$ on $\CA$. The opmonoidal
structure is given by 
$$\xymatrix @R1pc {
T(A\ox B) \ar@{=}[d] \ar[rr]^{T_2} && TA\ox TB \ar@{=}[d] \\
 (A\ox B)*I \ar[r]_-{1*\rho_I} & 
(A\ox B)*(I\ox I) \ar[r]_-{\gamma} & (A*I)\ox (B*I)  \\
TI \ar@{=}[dr] \ar[rr]^{T_0} & & I \\
&  I*I \ar[ur]_{\lambda} . }$$
\end{proof}

\begin{remark}\label{rmk:EH}
As was mentioned in the introduction, one version of the Eckmann-Hilton argument states that to give to a braided monoidal category a further monoidal structure (in the 2-category of braided monoidal categories and braided strong monoid\-al functors) is actually not further structure, but just the requirement that the braided monoidal category be symmetric. We regard Theorem~\ref{thm:EH} as a generalization of this fact. Start with a lax-braided monoidal category in place of a braided one, and then consider a further skew monoidal structure on it. This time this {\em does} give further structure, but provided that we require the composite \eqref{skduoidalcond} to be invertible, this further structure reduces to an opmonoidal monad on the lax-braided monoidal category. In the non-skew case, this opmonoidal monad would be the identity.
\end{remark}

\section{The symmetric monoidal bicategory context}\label{smbc}

In this section we internalize the results of the previous section,
working in a braided monoidal bicategory \M in the sense of
\cite{MbHa} . We write as
if \M were in fact a 2-category. The braiding is denoted by 
$c_{A,B} \colon A\otimes B \lra B\otimes A$.

We write $\Mnd(\M)$ for the 2-category
of monads in \M , and $\Mnd^*(\M)$ for the bicategory
$\Mnd(\M\op)\op$; the objects of $\Mnd^*(\M)$ are still
just the monads in \M, but the 1-cells are the opmorphisms
of monads: these are similar to morphisms of monads except
that the direction of the 2-cell involved in the definition is 
reversed \cite{FTM}. (The definition of $\Mnd^*(\M)$  does not 
use the monoidal structure of \M.)

We also write $\Skew(\M)$ for the 2-category of skew monoidales,
opmonoidal morphisms, and monoidal natural transformations.
(This uses the monoidal structure of \M, but not the braiding.)

If \M is in fact braided, then $\Skew(\M)$ is also monoidal,
and so we can define monoidales and skew monoidales there.
A skew monoidale in $\Skew(\M)$ consists of skew
monoidales $(A,i,p)$ and $(A,k,m)$ such that $k$, $m$,
and the structure 2-cells $\alpha$, $\lambda$, and $\rho$ 
for $(A,k,m)$ are opmonoidal with
respect to $(A,i,p)$; such a structure $(A,k,m,i,p)$ 
 is what we call a skew duoidale in \M.

We also use the full braided monoidal structure of \M when we define $\LB(\M)$ to be the monoidal 2-category
of lax braided monoidales in \M, with opmonoidal morphisms. For an object $A\in\LB(\M)$, we write $\nabla\colon A\ox A\to A$
for the multiplication, $j\colon I\to A$ for the unit, and 
$\gamma$ for the 2-cell, defined using the lax braiding, which
 expresses the fact that $\nabla$ is itself opmonoidal. (The 
remaining structure is generally not mentioned explicitly.)

A lax braided monoidale $(A,i,p)$ determines a skew 
duoidale $(A,i,p,i,p)$.

A morphism in $\LB(\M)$ from $A$ to $B$ involves
a 1-cell $f\colon A\to B$ and 2-cells 
$$\xymatrix{
A\ox A \ar[r]^{f\ox f}_{~}="2" \ar[d]_{\nabla} & B\ox B \ar[d]^{\nabla} & 
& I \ar[dr]^{j}_{~}="4" \ar[dl]_{j}^{~}="3" \\
A \ar[r]_f^{~}="1" & B , & A \ar[rr]_f && B .
\ar@{=>}"1";"2"^{f_2} 
\ar@{=>}"3";"4"^{f_0} }$$

There is a 2-functor $R\colon \Skew(\M)\to\Mnd(\M)$ sending 
a skew monoidale $(A,i,m)$ to the monad 
$$\xymatrix{
A \ar[r]^-{1\ox i} & A\ox A \ar[r]^-{m} & A }$$
with multiplication
$$\xymatrix{
A \ar[d]_{1i} \ar[r]^{1i} & AA \ar[d]_{1i1}^{~}="3" \ar@/^1pc/[dr]^1_{~}="4"  & \ar@{}[d]_{~}="5"  \\
AA \ar[d]_{m} \ar[r]_{11i} & AAA \ar[d]_{m1}^{~}="1" \ar[r]^{1m} & AA \ar[d]^{m}_{~}="2"  \\
A \ar[r]_{1i} & AA \ar[r]_{m} & A 
\ar@{=>}"1";"2"^{\alpha} \ar@{=>}"3";"5"^-{1\lambda} }$$
and with unit $\rho$. 

In the diagram above we have omitted the
tensor products to save space; we have also not explicitly named 
the invertible 2-cells coming from pseudofunctoriality of the tensor product on \M. We shall continue to follow this practice throughout the paper, also not naming certain isomorphisms which form part of the ``ambient structure'' in \M or $\LB(\M)$, such as the associativity isomorphisms $\nabla.\nabla1\cong \nabla.1\nabla$ for a lax braided monoidale.

Since $\LB(\M)$ is a monoidal bicategory, there is a corresponding
2-functor $$R\colon  \Skew(\LB(\M))\to\Mnd^*(\LB(\M)) \ .$$

On the other hand there is a 2-functor 
$$T\colon \Mnd^*(\LBM)\to\Skew(\LBM)$$ 
sending a monad $(A,t)$ to the skew monoidale with multiplication 
$$\xymatrix{A\ox A \ar[r]^{t\ox 1} & A\ox A \ar[r]^-{\nabla} & A }$$
with unit 
$j\colon 1\to A$, with associativity constraint $\alpha$ given by 
$$\xymatrix{
AAA \ar[r]^{1t1}_{~}="1" \ar[d]_{t11} & AAA \ar[r]^{1\nabla} \ar[d]^{t11} & AA \ar[d]^{t1} \\
AAA \ar[r]^{t11}="2"_{~}="3" \ar[d]_{\nabla1} & 
AAA \ar[r]^{1\nabla}_{~}="5" \ar[d]^{\nabla1} & AA \ar[d]^{\nabla} \\
AA \ar[r]_{t1}^{~}="4" & AA \ar[r]_{\nabla} & A 
\ar@{=>}"2";"1"_{\mu t1} \ar@{=>}"4";"3"_{t_21} 
}$$ 
with right unit constraint $\rho$ given by 
$$\xymatrix{
A \ar[r]^{1j} \dtwocell_1^t{^\eta} & AA \ar[d]^{t1} \\
A \ar[r]^{1j} \ar[dr]_1 & AA \ar[d]^{\nabla} \\
& A }$$
and with left unit constraint $\lambda$ given by 
$$\xymatrix @C3pc {
AA \ar[d]_{t1}^{~}="1" & A \ar[l]_{j1} \ar@/^1pc/[dl]^{j1} \ar@{}[d]_{~~~}="2" \ar@/^2pc/[ddl]^1 \\
AA \ar[d]_{\nabla} & \\
A .
\ar@{=>}"1";"1"+/va(0) 2.5pc/^{t_01} 
}$$

Now consider the composite $RT$. This sends a monad $t$ on $A$
to a monad on $A$ whose underlying 1-cell is the right hand 
composite in the diagram
$$\xymatrix{
A \ar[r]^{1j} \ar[d]_{t} & AA \ar[d]^{t1} \\
A \ar[r]^{1j} \ar[dr]_{1} & AA \ar[d]^{\nabla} \\
& A }$$
(in which the two regions commute up to isomorphisms coming
from pseudofunctoriality of the tensor in \LBM, and the right
unit constraint for the lax braided monoidal structure on $A$). 
Compatibility of this isomorphism with the units for the monads
holds by definition of the monad on the right, and a straightforward calculation gives compatibility with the multiplications for the
monads as well. 

Thus we have an isomorphism $RT\cong 1$, whose component at an object $(A,t)$ of $\Mnd(\LBM)$ is the morphism 
$(A,t)\to RT(A,t)$ of monads which is the identity $A\to A$ 
equipped with the isomorphism of monads described above. 

Now consider the other composite $TR$. Suppose that 
$A = (A,i,m)$ is a skew monoidale in \LBM, for which $i\colon 1\to A$
is strong (op)monoidal, as will always be the case for 
an object in the image of $T$. In particular, we have $i\cong j$,
so we may as well take $i$ to be $j$ itself. 

For such an $A$, we have a 2-cell
$$\xymatrix{
AAA \ar[r]^{m1}_{~}="2" \ar[d]_{1\nabla} & AA \ar[d]^{\nabla} \\
AA \ar[r]_{m}^{~}="1" & A 
\ar@{=>}"1";"2"^{\psi} }$$
given by the composite 
$$\xymatrix{
AAA \ar[dr]_{11j1} \ar@/^1pc/[drr]^{m1}_{~}="3" \ar@/_1pc/[ddr]_{1\nabla} \\
& AAAA \ar[r]^{mm}_{~}="2" \ar[d]_{\nabla^2} \ar@{=>}"3"_(0.6){m\lambda} & 
AA \ar[d]^{\nabla} \\
& AA \ar[r]_{m}^{~}="1" & A 
\ar@{=>}"1";"2"^{m_2} 
}$$
where 
$\nabla^2=(\xymatrix{A^4 \ar[r]^{1c_{A,A}1} & A^4 \ar[r]^{\nabla\nabla} & A^2})$
is the multiplication on $A^2$.

\begin{proposition}\label{prop:psi-a}
  The 2-cell $\psi$ satisfies 
$$\xymatrix{
& A^3 \ar[r]^{1\nabla}  \ar[dr]^{\nabla1} & A^2 \ar[dr]^{\nabla} &&&
& A^3 \ar[r]^{1\nabla} & A^2 \ar[dr]^{\nabla} \\
A^4 \ar[ur]^{m11} \ar[dr]_{1\nabla1} & \ar@{}[d]|{~}="1" \ar@{}[u]|{~}="2" & A^2 \ar[r]^{\nabla} & A & = & 
A^4 \ar[ur]^{m11} \ar[r]^{11\nabla} \ar[dr]_{1\nabla1} &
 A^3 \ar[ur]^{m1} \ar[rd]_{1\nabla} & \ar@{}[d]|{~}="3" \ar@{}[u]|{~}="4" & A \\
& A^3 \ar[r]_{1\nabla} \ar[ur]^{m1}  
& A^2 \ar[ur]_{m} \ar@{=>}[u]^{\psi}  &&&
& A^3 \ar[r]_{1\nabla} & A^2 \ar[ur]_{m} 
\ar@{=>}"1";"2"^{\psi1} 
\ar@{=>}"3";"4"^{\psi} 
}$$
\end{proposition}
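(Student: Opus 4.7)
The plan is to unfold both sides of the equation using the definition of $\psi$ from the displayed pasting, and then reduce the resulting identity to the associativity axiom for $m_2$ in the skew duoidale structure on $A$.

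I first expand every instance of $\psi$ on each side (including $\psi1$, which is just $\psi$ tensored with an identity) as the pasting of $m_2$, applied after insertion of a unit $j$ in the appropriate position, with the instance of $\lambda$ that collapses this inserted unit. Each side of the claimed equation then becomes a pasting built from copies of $m_2$ and $\lambda$ together with the structural isomorphisms of the lax braided monoidale $(A,j,\nabla)$ and the pseudofunctoriality 2-cells of $\otimes$ in \M.

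Next I invoke the associativity axiom for $m_2$ from the skew duoidale definition (condition (1) in the list preceding Theorem~\ref{thm:EH}, interpreted in the bicategorical setting of Section~\ref{smbc}). This axiom equates the two ways of pasting two copies of $m_2$ on $A^5$, mediated by the associator $\alpha$ of $(A,j,\nabla)$; since $(A,j,\nabla)$ is a lax braided monoidale, this $\alpha$ is an invertible ambient coherence isomorphism. Applying this move converts the pasting on the left into one whose copies of $m_2$ are in the same configuration as those on the right.

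Finally I use the compatibility of $\lambda$ with $m_2$ (item (6) in the skew duoidale axiom list) to match up the surplus unit insertions and their collapses on the two sides; what remains is then a pure coherence identity among the structural isomorphisms of $(A,j,\nabla)$ and the pseudofunctoriality 2-cells of $\otimes$ in \M, which follows from the coherence of the lax braided monoidale and of the tensor product. The hardest part will be bookkeeping: the diagrams are large and involve many ambient 2-cells in \M that are suppressed from the notation, so care is needed to track them, but no single step is substantively deep.
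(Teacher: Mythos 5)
Your plan is correct and is essentially the paper's own proof, which for Proposition~\ref{prop:psi-a} consists of the single instruction ``use naturality, coassociativity of $m_2$, monoidale axioms for $(A,\nabla,j)$, and opmonoidality of $\lambda$'' --- exactly the ingredients you identify (your axiom~(1) is the coassociativity of $m_2$ against the invertible associator of $(A,j,\nabla)$, and your item~(6) is the opmonoidality of $\lambda$). The only quibble is a bookkeeping slip: the coassociativity square for $m_2$ lives over $(A^{2})^{\otimes 3}=A^{6}$ rather than $A^{5}$, but this does not affect the argument.
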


\proof
Use naturality, coassociativity of $m_2$, monoidale axioms
for $(A,\nabla,j)$, and opmonoidality of $\lambda$.
\endproof

\begin{proposition}\label{prop:psi-alpha}
  The 2-cell $\psi$ satisfies 
$$\xymatrix{
& A^3 \ar[r]^{m1}  \ar[dr]_{1\nabla} & A^2 \ar[dr]^{\nabla} &&&
& A^3 \ar[r]^{m1} & A^2 \ar[dr]^{\nabla} \\
A^4 \ar[ur]^{1m1} \ar[dr]_{11\nabla} & \ar@{}[d]|{~}="1" \ar@{}[u]|{~}="2" & A^2 \ar[r]^{m} \ar@{=>}[u]^{\psi} & A & = & 
A^4 \ar[ur]^{1m1} \ar[r]^{m11} \ar[dr]_{11\nabla} &
 A^3 \ar[ur]_{m1} \ar[rd]_{1\nabla} \ar@{=>}[u]^{\alpha1} & \ar@{}[d]|{~}="3" \ar@{}[u]|{~}="4" & A \\
& A^3 \ar[r]_{m1} \ar[ur]^{1m} 
& A^2 \ar[ur]_{m} \ar@{=>}[u]^{\alpha}  &&&
& A^3 \ar[r]_{m1} & A^2 \ar[ur]_{m} 
\ar@{=>}"1";"2"^{1\psi} 
\ar@{=>}"3";"4"^{\psi} }$$
\end{proposition}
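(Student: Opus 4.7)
The plan is to expand each occurrence of $\psi$ on both sides via its definition, namely as a pasting of $m_2$, the 2-cell $m\lambda$, and the pseudofunctoriality identification $\nabla^2 \circ (11j1) \cong 1\otimes\nabla$ that uses the strong monoidality of $j$. After this substitution, both sides of the claimed equation become pastings whose only non-coherence 2-cells are one copy of $m_2$, one copy of $\lambda$, and one copy of the associator $\alpha$ of $(A,i,m)$, together with the prescribed whiskerings by $1m1$ and $11\nabla$.

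The essential ingredient is then the opmonoidality of $\alpha$. Since $(A,i,m)$ is a skew monoidale in $\LB(\M)$, the 2-cell $\alpha$ must live in $\LB(\M)$, i.e.\ be opmonoidal with respect to the lax-braided structure $(A,\nabla,j)$; this is one of the two conditions in item~(5) of the skew duoidal axiom list in Section~\ref{catlevel}. The corresponding pasting equation allows one to push $\alpha$ past an $m_2$, exchanging it for an $\alpha$ whiskered by (essentially) the tensor of two $m_2$'s. Applying this exchange to the expansion of the LHS moves the $\alpha$ from its ``outer'' position over $(A,\nabla,j)$ into the ``inner'' position that matches the expansion of the RHS.

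What remains is purely coherence: naturality of $m_2$ along $11j1$, pseudofunctoriality of $\otimes$ in \M, and the unit axioms of the monoidale $(A,\nabla,j)$ used to collapse the auxiliary $j$-factors introduced by the expansion of $\psi$. This is entirely parallel to the toolkit invoked in the proof of Proposition~\ref{prop:psi-a}, where opmonoidality of $\lambda$ plays the role now assigned to opmonoidality of $\alpha$. The main obstacle is the combinatorial bookkeeping of bracketings and of the various pseudofunctoriality cells generated by the unfolding; but the substantive content is a single axiom of the skew duoidal data.
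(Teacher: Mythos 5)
Your overall strategy is the paper's: expand $\psi$ by its definition and use opmonoidality of the skew associator $\alpha$ (as a $2$-cell in $\LB(\M)$) as the pivotal exchange. However, there is a gap in your accounting of what remains after that exchange. First, your claim that after substitution both sides contain ``one copy of $m_2$, one copy of $\lambda$'' is not right: the left-hand side contains two whiskered copies of $\psi$, hence two copies of $m_2$ and two of $\lambda$, against a single copy of each on the right. Reconciling these is exactly where the work lies: the two $m_2$'s must first be assembled (via naturality and the rewriting of $m_2$ in terms of $(1m)_2$, i.e.\ the composite opmonoidal constraint of $m\circ(1\ox m)$) into the form to which the opmonoidality of $\alpha$ applies.

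Second, and more importantly, the opmonoidality of $\alpha$ trades one $\alpha$ for $\alpha\ox\alpha$ (postcomposed with the constraint of $m\circ(m\ox1)$), whereas the right-hand side of the proposition contains only $\alpha\ox 1$. The surplus copy of $\alpha$, restricted along the $j$-insertions coming from the expansion of $\psi$, must be cancelled against one of the two $\lambda$'s, and this cancellation is \emph{not} coherence: it is one of the unit axioms of the skew monoidale $(A,m,j)$ relating $\alpha$ to $\lambda$ (the analogue of $\lambda_{A\ox B}\circ\alpha_{I,A,B}=\lambda_A\ox 1_B$). The paper's proof explicitly invokes ``the skew monoidale axioms for $(A,m,j)$'' at this point; your proposal replaces this by ``unit axioms of the monoidale $(A,\nabla,j)$'' and declares the remainder ``purely coherence'', which omits the one substantive non-coherence ingredient besides opmonoidality of $\alpha$. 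With that axiom added, and the cell count corrected, the argument goes through as you outline.
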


\proof
Rewrite $m_2$ in terms of $(1m)_2$, then use 
naturality and opmonoidality of $\alpha$, and the skew 
monoidale axioms for $(A,m,j)$.
\endproof

Restricting $\psi$ along $1j1\colon AA\to AAA$ and using the isomorphism
$1\nabla.1j1\cong 1$ gives a 2-cell
$$\xymatrix{
AA \ar[r]^{t1}_{~}="2" \ar@{=}[d] & AA \ar[d]^{\nabla} \\
AA \ar[r]_{m}^{~}="1" & A 
\ar@{=>}"1";"2"^{\chi} }$$
where $t$ is the induced monad, given by $m.1j$. 

\begin{proposition}\label{prop:psi-chi}
  The 2-cells $\psi$ and $\chi$ are linked via the equation
$$\vcenter{\vbox{\xymatrix{
A^2 \ar@/^2pc/[rr]^{t1} & A^3 \ar[r]_{1\nabla} \ar[d]^{\nabla 1} & 
A^2 \ar[d]^{\nabla} \\
A^3 \ar[u]^{1\nabla} \ar[ur]^{t11}_(.6){~}="2" \ar[r]_{m1}^(.7){~}="1"  \ar[dr]_{1\nabla} & 
A^2 \ar[r]^{\nabla} & A \\
& A^2 \ar[ur]_{m} \ar@{=>}[u]_{\psi} 
\ar@{=>}"1";"1"+/va(90) 1.5pc/^{\chi 1} 
}}}=
\vcenter{\vbox{\xymatrix{
A^2 \ar@/^2pc/[rr]^{t1} \ar@{=}[ddr] & & 
A^2 \ar[d]^{\nabla} \\
A^3 \ar[u]^{1\nabla} \ar[dr]_{1\nabla} & \ar@{}[r]^(.2){~}="1" \ar@{}[ur]_(.3){~}="2" & A \\
& A^2 \ar[ur]_{m}
\ar@{=>}"1";"1"+/va(90) 3pc/^{\chi} 
}}}$$
\end{proposition}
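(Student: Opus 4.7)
The plan is to deduce the equation by restricting both sides of Proposition~\ref{prop:psi-a} along the 1-cell $1j11 \colon A^3 \to A^4$ (which inserts $j$ in the second factor), and then simplifying using the invertible unit 2-cell $\nabla \circ (j1) \cong 1_A$ coming from the lax braided monoidale $(A,\nabla,j)$.

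First I would unfold the definition of $\chi$. By construction, $\chi\colon m \Rightarrow \nabla\circ t1$ arises as $\psi$ precomposed with $1j1\colon A^2 \to A^3$, modulo the unit iso $1\nabla \circ 1j1 \cong 1_{A^2}$. Horizontally composing on the right with $1_A$ then shows that $\chi 1$ is $\psi \ox 1$ precomposed with $1j11\colon A^3 \to A^4$, modulo the analogous unit iso.

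Next I would compute the restriction of each side of Proposition~\ref{prop:psi-a} along $1j11$. On the LHS of~\ref{prop:psi-a}, precomposition converts $m11$ to $t11$, converts $1\nabla 1$ to (an isomorph of) $1_{A^3}$, and converts the upper 2-cell $\psi 1$ into $\chi 1$; the lower $\psi$ is unaffected, since its domain already sits at the $A^3$ level. After absorbing the pseudofunctoriality interchange $1\nabla \circ t11 \cong t1 \circ 1\nabla$, the resulting restricted pasting is exactly the LHS of Proposition~\ref{prop:psi-chi}. Symmetrically, restricting the RHS of~\ref{prop:psi-a} sends $m11$ to $t11$, sends $11\nabla$ to $1j1 \circ 1\nabla$, sends $1\nabla 1$ to an identity, and sends the single middle $\psi$ to $\psi \cdot (1j1) \cdot (1\nabla) = \chi \cdot 1\nabla$. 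This is precisely the RHS of Proposition~\ref{prop:psi-chi}.

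Since the two sides of Proposition~\ref{prop:psi-a} are equal, so are their restrictions along $1j11$, which gives the claim. The step I expect to be the main obstacle is the coherence bookkeeping: verifying that the unit and associativity isomorphisms of the monoidale $(A,\nabla,j)$ together with the pseudofunctoriality isomorphisms of $\ox$ combine exactly as drawn in the pasting diagrams of Proposition~\ref{prop:psi-chi}, and that $\chi 1$ and $\chi \cdot 1\nabla$ appear with the correct orientation and composition order after restriction. This is routine but needs to be tracked with some care.
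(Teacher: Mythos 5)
Your proposal is correct and is exactly the paper's argument: the proof given there is simply to restrict the equality of Proposition~\ref{prop:psi-a} along $1j11\colon A^3\to A^4$ and absorb the unit isomorphism $\nabla.j1\cong 1$. The bookkeeping you describe (converting $m11$ to $t11$, $1\nabla1\circ 1j11$ to an identity, $11\nabla\circ 1j11$ to $1j1\circ 1\nabla$, and hence $\psi1$ to $\chi1$ and the right-hand $\psi$ to $\chi\cdot 1\nabla$) is precisely what is needed and is carried out correctly.
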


\proof
Take the equality in Proposition~\ref{prop:psi-a} and restrict along
the arrow $1j11\colon A^3\to A^4$.
\endproof

We shall show that $\chi$ is compatible with the associativity and 
unit constraints and so makes the identity morphism $1\colon A\to A$
into a morphism of skew monoidales from $(A,m)$ to 
$TR(A,m)$. 

Restricting $\lambda$ along $j\colon 1\to A$ gives 
$m_0\colon m.jj\to j$; it follows that $\chi$ is compatible with 
the right unit constraints. Compatibility with the left 
unit constraints once again uses the fact that $\lambda.j=m_0$,
along with the fact that $\lambda$ is opmonoidal.

It remains to check that $\chi$ is compatible with the 
associativity constraints. This says that the composites

\begin{equation}
  \label{eq:1}
\vcenter{\vbox{
 \xymatrix{
A^3 \ar[d]^{t11} \ar@/_3pc/[dd]_{m1}^(.7){~}="1" \ar[r]^{1t1} & 
A^3 \ar[r]^{1\nabla} & A^2 \ar[d]^{tA} \\
A^3 \ar[d]^{\nabla 1} 
&& A^2 \ar[d]^{\nabla} \\
A_2 \ar[r]^{t1} \ar@/_2pc/[rr]_{m}^(.4){~}="2" & 
A^2 \ar[r]^{\nabla}_{~}="3" \ar@{=>}[ur]^{\alpha'} 
\ar@{=>}"2";"2"+/va(45)+1.5pc/^(.4){\chi} 
\ar@{=>}"1";"1"+/va(45)+1.5pc/^{\chi1} 
& A 
}}}
=
\vcenter{\vbox{\xymatrix{
& A^3 \ar[dr]^{1\nabla}_{~}="4" \\
A^3 \ar[dd]_{m1} \ar[ur]^{1t1} \ar[rr]_{1m}="1"^{~}="3" & 
\ar@{=>}"3";"3"+/va(45) 1.8pc/^{1\chi} & A^2 \ar[dr]^{t1}_{~}="5" \ar[dd]_m="7"^{~}="6" \\
&& \ar@{=>}"6";"6"+/va(45) 1.8pc/_{\chi} & A^2 \ar[dl]^{\nabla} \\
A^2 \ar[rr]_{m}^{~}="2" & \ar@{=>}"2";"7"^{\alpha} & A }}}
\end{equation}
are equal, where $\alpha'$ is the associativity constraint for 
$TR(A,m,i)$, given by 
$$\xymatrix @C4pc {
A^3 \ar[r]^{A^2jA} \ar[d]_{AjA^2} & 
A^4 \ar[r]^{AmA}_{~}="1" \ar[d]_{AjjA^3} & 
A^3 \ar[r]^{A\nabla} \ar[d]^{AjA^2} & A^2 \ar[d]^{AjA} \\
A^4 \ar[r]^{A^2jAjA} \ar[d]_{mA^2} & 
A^6 \ar[r]^{AmmA}="2"_{~}="3" \ar[d]_{mA^3} &
A^4 \ar[r]^{A^2\nabla} \ar[d]^{mA^2} & A^3 \ar[d]^{mA} \\
A^3 \ar[r]^{AjAjA} \ar[d]_{\nabla A} & 
A^5 \ar[r]^{mmA}="4"_{~}="5" \ar[d]_{\nabla^2A} & 
A^3 \ar[r]^{A\nabla} \ar[d]^{\nabla A} & A^2 \ar[d]^{\nabla} \\
A^2 \ar[r]_{AjA} & A^3 \ar[r]_{mA}^{~}="6" & A^2 \ar[r]_\nabla & A 
\ar@{=>}"2";"1"^{Am_0mA} 
\ar@{=>}"4";"3"^{\alpha mA} 
\ar@{=>}"6";"5"^{m_2 A} 
}$$
where $\nabla^2\colon A^4\to A^2$ denotes the multiplication on 
$AA$, defined using $\nabla$ and the braiding. We can 
rewrite this as
$$\xymatrix @C4pc {
A^3 \ar[rr]^{A^2jA} \ar[d]_{AjA^2} & &
A^4 \ar[r]^{AmA}_{~}="1" \ar[d]_{AjjA^3} & 
A^3 \ar[r]^{A\nabla} \ar[d]^{AjA^2} & A^2 \ar[d]^{AjA} \\
A^4 \ar[r]^{A^3jA} \ar[d]_{mA^2}  & A^5 \ar[r]^{A^2jA^3} \ar[d]_{mA^3} & 
A^6 \ar[r]^{AmmA}="2"_{~}="3" \ar[d]_{mA^4} &
A^4 \ar[r]^{A^2\nabla} \ar[d]^{mA^2} & A^3 \ar[d]^{mA} \\
A^3 \ar[r]^{A^2jA} \ar[d]_{\nabla A} & A^4 \ar[r]^{AjA^3}  \ar[dr]_{\nabla A^2} &
A^5 \ar[r]^{mmA}="4"_{~}="5" \ar[d]_{\nabla^2A} & 
A^3 \ar[r]^{A\nabla} \ar[d]^{\nabla A} & A^2 \ar[d]^{\nabla} \\
A^2 \ar[rr]_{AjA} && A^3 \ar[r]_{mA}^{~}="6" & A^2 \ar[r]_\nabla & A 
\ar@{=>}"2";"1"^{Am_0mA} 
\ar@{=>}"4";"3"^{\alpha mA} 
\ar@{=>}"6";"5"^{m_2 A} 
}$$
and now the left hand side of \eqref{eq:1} becomes
$$\xymatrix @C4pc {
& A^3 \ar[rr]^{A^2jA} \ar[d]^{AjA^2} \ar[ddl]_1 & &
A^4 \ar[r]^{AmA}_{~}="1" \ar[d]_{AjjA^3} & 
A^3 \ar[r]^{A\nabla} \ar[d]^{AjA^2} & A^2 \ar[d]^{AjA} \\
& A^4  \ar[r]^{A^3jA} \ar[d]^{mA^2}_{~}="8"  \ar[dl]^{A\nabla A} &  
A^5 \ar[r]^{A^2jA^3} \ar[d]_{mA^3} & 
A^6 \ar[r]^{AmmA}="2"_{~}="3" \ar[d]_{mA^4} &
A^4 \ar[r]^{A^2\nabla} \ar[d]^{mA^2} & A^3 \ar[d]^{mA} \\
A^3 \ar[dr]_{mA}^{~}="7" \ar@{=>}"7";[r]_{\psi} & 
A^3 \ar[r]^{A^2jA} \ar[d]^{\nabla A} & A^4 \ar[r]^{AjA^3}  \ar[dr]_{\nabla A^2} &
A^5 \ar[r]^{mmA}="4"_{~}="5" \ar[d]_{\nabla^2A} & 
A^3 \ar[r]^{A\nabla} \ar[d]^{\nabla A} & A^2 \ar[d]^{\nabla} \\
&A^2 \ar[rr]_{AjA} \ar[drrr]_{1} && 
A^3 \ar[r]_{mA}^{~}="6" \ar[dr]_{A\nabla} & A^2 \ar[r]_\nabla  & A \\
&&&& A^2 \ar[ur]_{m}  \ar@{=>}[u]_{\psi} 
\ar@{=>}"2";"1"^{Am_0mA} 
\ar@{=>}"4";"3"^{\alpha mA} 
\ar@{=>}"6";"5"^{m_2 A} 
}$$
which can also be written as 
$$\xymatrix @C4pc {
A^3 \ar[rr]^{A^2jA} \ar[d]^{AjAjA} \ar@/_2pc/[ddd]|{A^2jA} \ar@/_4pc/[dddd]_1 & &
A^4 \ar[r]^{AmA}_{~}="1" \ar[d]_{AjjA^3} & 
A^3 \ar[r]^{A\nabla} \ar[d]^{AjA^2} & A^2 \ar[d]^{AjA} \\
A^5 \ar[rr]^{A^2jA^3} \ar[dr]^{mA^3} \ar[dd]^{A\nabla A^2} && 
A^6 \ar[r]^{AmmA}="2"_{~}="3" \ar[d]_{mA^4} &
A^4 \ar[r]^{A^2\nabla} \ar[d]^{mA^2} & A^3 \ar[d]^{mA} \\
 & 
A^4 \ar[r]^{AjA^3}  \ar[dr]_{\nabla A^2} &
A^5 \ar[r]^{mmA}="4"_{~}="5" \ar[d]_{\nabla^2A} & 
A^3 \ar[r]^{A\nabla} \ar[d]^{\nabla A} & A^2 \ar[d]^{\nabla} \\
A^4 \ar[rr]_{mA^2}^{~}="7" \ar@{=>}"7";[ur]_{\psi A} \ar[d]^{A^2\nabla} &&
A^3 \ar[r]_{mA}^{~}="6" \ar[dr]_{A\nabla} & A^2 \ar[r]_\nabla  & A \\
A^3 \ar[rrr]^{mA} &&& A^2 . \ar[ur]_{m}  \ar@{=>}[u]_{\psi} 
\ar@{=>}"2";"1"^{Am_0mA} 
\ar@{=>}"4";"3"^{\alpha mA} 
\ar@{=>}"6";"5"^{m_2 A} 
}$$
On the other hand, the right hand side of \eqref{eq:1} is 
$$\xymatrix{
& A^3 \ar[dr]^{1\nabla}_{~}="7" \\
A^3 \ar[dd]_{m1} \ar[ur]^{1t1} \ar[rr]_{1m}="1"^{~}="6" & 
\ar@{=>}"6";"6"+/va(45) 2pc/^{1\chi} & A^2 \ar[dr]^{t1}_{~}="2" \ar[dd]_m="4"^{~}="3" \\
&& \ar@{=>}"3";"3"+/va(45) 2pc/^{\chi} & A^2 \ar[dl]^{\nabla} \\
A^2 \ar[rr]_{m}^{~}="5" & \ar@{=>}"5";"5"+/va(45) 3pc/^{\alpha} & A }
$$
and now  using Proposition~\ref{prop:psi-chi} this is 
$$\xymatrix{
&&&& A^2 \ar[dddl]^{t1} \\ 
&& A^3 \ar[dl]_{1\nabla}^{~} \ar[d]_(.55){m1}="4"^(.4){~}="5" \ar[r]^{t11}_(.4){~}="6" \ar@/^1pc/[urr]^{1\nabla} & 
A^3 \ar[dl]^{\nabla1}_{~} \ar[dd]_{1\nabla} \\
A^3 \ar[r]_{1m}^(.5){~}="7" \ar[d]_{m1} \ar@/^2pc/[urr]^{1t1} & 
A^2 \ar[d]_m="2"^(.3){~}="3" 
&
A^2 \ar[dl]^{\nabla}_{~} \\
A^2 \ar[r]_{m}^(.3){~}="1" & A & & A^2 \ar[ll]^{\nabla} 
\ar@{=>}"1";"1"+/va(45) 2pc/^{\alpha} 
\ar@{=>}"3";"3"+/va(45) 2pc/_{\psi} 
\ar@{=>}"5";"5"+/va(45) 1.5pc/_{\chi 1} 
\ar@{=>}"7";"7"+/va(45) 3.5pc/^{1\chi} 
}$$
which  by Proposition~\ref{prop:psi-alpha}  is the same as 
$$\xymatrix @C3pc {
A^3 \ar[dr]^{11j1} \ar[dd]_{1} \\
& A^4 \ar[dr]^{1m1}_(.8){~}="2" \ar[dl]_{11\nabla} \ar[d]^{m11} &&&
A^2 \ar[dddl]^{t1} \\
A^3 \ar[d]_{m1} & A^3 \ar[dr]_{m1}="4"^(.3){~}="1" \ar[dl]^{1\nabla}
& 
A^3  \ar[d]_{m1}^(.5){~}="5" \ar[r]^{t11}_(.4){~}="6" \ar@/^1pc/[urr]^{1\nabla} & 
A^3 \ar[dl]^{\nabla1}_{~} \ar[dd]^{1\nabla} \\
A^2  \ar[dr]_{m}^(.7){~}="3"  & &
A^2 \ar[dl]^{\nabla}_{~} \\
& A & & A^2 \ar[ll]^{\nabla} 
\ar@{=>}"1";"1"+/va(45) 2pc/^{\alpha1} 
\ar@{=>}"3";"3"+/va(45) 4pc/^{\psi} 
\ar@{=>}"5";"5"+/va(45) 2pc/_{\chi 1} 
}$$
which we can rewrite as 
$$\xymatrix @C3pc {
A^3 \ar[r]^{A^2jA} \ar[d]_{1}   &
A^4 \ar[r]^{AmA}  \ar@/_3pc/[ddd]_{mA^2}^(.7){~}="1" \ar[dl]_{A^2\nabla} & 
A^3 \ar[r]^{A\nabla} \ar[d]^{AjA^2} \ar@/_3pc/[ddd]_(.3){mA}="2"^(.7){~}="3" & A^2 \ar[d]^{AjA} \\
A^3  \ar@/_4pc/[dddrr]_{mA} && 
A^4 \ar[r]^{A^2\nabla} \ar[d]^{mA^2}_{~}="4" & A^3 \ar[d]^{mA} \\
&&
A^3 \ar[r]^{A\nabla} \ar[d]^{\nabla A} & A^2 \ar[d]^{\nabla} \\
& A^3 \ar[r]_{mA}^{~}="6" \ar[dr]_{A\nabla}^(0.8){~}="7" & A^2 \ar[r]^\nabla  & A \\
&& A^2 \ar[ur]_{m}  
\ar@{=>}"1";"1"+/va(45) 4pc/_{\alpha A} 
\ar@{=>}"3";"3"+/va(45) 2.5pc/^{\chi A} 
\ar@{=>}"7";"7"+/va(45) 3pc/^(.4){\psi} 
}$$
and now \eqref{eq:1} will follow if we can prove

\begin{equation}
  \label{eq:2}
\xymatrix @C4pc {
A^2 \ar[rr]^{A^2j} \ar[d]^{AjAj} \ar@/_2pc/[ddd]_{A^2j} &&
A^3 \ar[r]^{Am}_{~}="1" \ar[d]_{AjjA^2} & 
A^2  \ar[d]^{AjA} \\
A^4 \ar[rr]^{A^2jA^2} \ar[dr]^{mA^2}_{~}="8" \ar[dd]^{A\nabla A} && 
A^5 \ar[r]^{Amm}="2"_{~}="3" \ar[d]_{mA^3} &
A^3 \ar[d]^{mA}  \\
 & 
A^3 \ar[r]^{AjA^2}  \ar[dr]_{\nabla A} &
A^4 \ar[r]^{mm}="4"_{~}="5" \ar[d]_{\nabla^2} & 
A^2 \ar[d]^{\nabla} \\
A^3 \ar[rr]_{mA}^(0.3){~}="7"   &&
A^2 \ar[r]_{m}^{~}="6" & A 
\ar@{=>}"2";"1"^{Am_0m} 
\ar@{=>}"4";"3"^{\alpha m} 
\ar@{=>}"6";"5"^{m_2} 
\ar@{=>}"7";"7"+/va(90) 3pc/_{\psi A}
}=
\xymatrix{ 
A^2 \ar[r]^{A^2j} & A^3 \ar[r]^{Am} \ar@/_3pc/[ddd]_{mA}^{~}="3" & 
A^2 \ar@/_3pc/[ddd]_{m}="4"^{~}="1" \ar[d]^{AjA} \\
&& A^3 \ar[d]^{mA}_{~}="2" \\
&& A^2 \ar[d]^{\nabla} \\
& A^2 \ar[r]_{m} & A 
\ar@{=>}"1";"2"^{\chi} 
\ar@{=>}"3";"4"^{\alpha} 
}
\end{equation}

The right hand side can be rewritten as 
$$\xymatrix{
A^2 \ar[r]^{A^2j}  & A^3 \ar[ddd]_1 \ar[r]^{Am} 
& A^2 \ar[d]^{AjA} \ar@/_2pc/[dd]_1 \\
&& A^3 \ar[d]^{A\nabla} \ar[dr]^{mA}_(.7){~}="2" \\
&& A^2 \ar[dr]_{m} \ar@{}[dr]_(.3){~}="4"^(.3){~}="1" & A^2 \ar[d]^{\nabla} \\ 
 & A^3 \ar[ur]^{Am} \ar[dr]_{mA}^(.6){~}="3" && A\\
&& A^2 \ar[ur]_{m} 
\ar@{=>}"1";"1"+/va(70) 1.8pc/_{\psi} 
\ar@{=>}"3";"3"+/va(70) 3pc/_{\alpha} 
}$$
and so, using one of the counit laws for the opmonoidal structure on 
$m$,  as the composite on the left in the following display,
which in turn can be written as the composite on the right.
$$\xymatrix @C3pc {
& & A^2 \ar[d]^{AjA}  \\
A^2 \ar[r]^{A^2j} &
A^3 \ar@/_3pc/[dd]_1 \ar[ur]^{Am}_(.6){~}="6" \ar[d]_{AjjA^2} & A^3 \ar[d]^{A\nabla} \ar[dr]^{mA}_(.6){~}="2" \\
& A^5 \ar[d]_{A\nabla^2} \ar[ur]|(.3){Amm}^{~}="5"_(.6){~}="8" & A^2 \ar[dr]_{m} \ar@{}[dr]_(.3){~}="4"^(.3){~}="1" & A^2 \ar[d]^{\nabla} \\ 
 & A^3 \ar[ur]_{Am}^{~}="7" \ar[dr]_{mA}^(.7){~}="3" && A\\
&& A^2 \ar[ur]_{m} 
\ar@{=>}"1";"1"+/va(70) 2pc/_{\psi} 
\ar@{=>}"3";"3"+/va(70) 4pc/_{\alpha} 
\ar@{=>}"7";"7"+/va(70) 2.5pc/^(.3){Am_2} 
\ar@{=>}"5";"5"+/va(70) 3pc/|(.5){Am_0m}
}
\xymatrix @C3pc {
& & A^2 \ar[d]^{AjA}  \\
A^2 \ar[r]^{A^2j} &
A^3 \ar@/_3pc/[ddd]_1 \ar[ur]^{Am}_(.6){~}="6" \ar[d]_{AjjA^2} & 
A^3 \ar[d]^{A^2jA} \ar@/^2pc/[ddr]^{mA}_(.45){~}="14" \\
& A^5 \ar[ur]_(.3){Amm}^(.6){~}="5" \ar[d]_{A^3jA^2} & 
A^4 \ar[d]_{\nabla^2} \ar[dr]_(.4){mm} \ar@{}[dr]^{~}="13"_(.65){~}="2"  \\
& A^6 \ar[d]_{\nabla^3} \ar[ur]^{AmAm}_(.6){~}="8" & 
A^2 \ar[dr]_{m} \ar@{}[dr]_(.3){~}="4"^(.4){~}="1" & A^2 \ar[d]^{\nabla} \\ 
 & A^3 \ar[ur]_{Am}^(.6){~}="7" \ar[dr]_{mA}^(.8){~}="3" && A \\
&& A^2 \ar[ur]_{m} 
\ar@{=>}"1";"1"+/va(70) 2pc/_{m_2} 
\ar@{=>}"3";"3"+/va(70) 4pc/_{\alpha} 
\ar@{=>}"7";"7"+/va(70) 2.5pc/^(.3){(Am)_2} 
\ar@{=>}"5";"5"+/va(70) 3pc/|(.5){Am_0m}
\ar@{=>}"13";"13"+/va(70) 2pc/_{m\lambda} 
}$$
By opmonoidality of $\alpha$, this is equal to the composite 
on the  left in the following display 
which by one of the unit axioms for the monoidale $(A,m,j)$
is equal to the diagram on the right.
$$\xymatrix @C3pc {
& & A^2 \ar[d]^{AjA}  \\
A^2 \ar[r]^{A^2j} &
A^3 \ar@/_3pc/[ddd]_1 \ar[ur]^{Am}_(.6){~}="6" \ar[d]_{AjjA^2} & 
A^3 \ar[d]^{A^2jA} \ar@/^2pc/[ddr]^{mA}_(.45){~}="14" \\
& A^5 \ar[ur]_(.3){Amm}^(.6){~}="5" \ar[d]_{A^3jA^2} & 
A^4 \ar[dr]^(.4){mm}="13"_(.25){~}="4"  \\
& A^6 \ar[d]_{\nabla^3} \ar[ur]^{AmAm} \ar[dr]^(.3){mAmA}_(.45){~}="8"^(.8){~}="3" & 
 & A^2 \ar[d]^{\nabla} \\ 
 & A^3  \ar[dr]_{mA}^(.2){~}="7" & 
A^4 \ar[ur]^{mm}_(.6){~}="2" 
\ar[d]^{\nabla^2} & A\\
&& A^2 \ar[ur]_{m}^(.6){~}="1" 
\ar@{=>}"1";"1"+/va(70) 3pc/^{m_2} 
\ar@{=>}"3";"3"+/va(70) 4.5pc/_{\alpha\alpha} 
\ar@{=>}"7";"7"+/va(70) 2pc/_(.3){(mA)_2} 
\ar@{=>}"5";"5"+/va(70) 3pc/|{Am_0m}
\ar@{=>}"13";"13"+/va(70) 2pc/_{m\lambda} 
}
\xymatrix @C3pc {
& & A^2 \ar[d]^{AjA}  \\
A^2 \ar[r]^{A^2j} \ar[d]|(.4){AjjA} \ar@/_2pc/[dd]_1 &
A^3  \ar[ur]^{Am}_{~}="6" \ar[d]|(.4){AjjA^2} & 
A^3  \ar@/^2pc/[ddr]^{mA}_(.45){~}="4" \\
A^4 \ar[dr]|{A^2jA^2j} \ar[d]_{\nabla^2} & 
A^5 \ar[ur]_{Amm}^{~}="5" \ar[d]|(.4){A^3jA^2} \ar@/^2pc/[ddr]^(0.3){mA^3}_(0.45){~}="14" & \ar@{}[r]_{~}="16" & \\
A^2 \ar[dr]_{A^2j} & A^6 \ar[d]_{\nabla^3}  \ar[dr]|{mAmA}_(.45){~}="8"^(.45){~}="13" & 
\ar@{}[r]^(.3){~}="15" 
 & A^2 \ar[d]^{\nabla} \\ 
 & A^3  \ar[dr]_{mA}^(.2){~}="7" & 
A^4 \ar[ur]^{mm}_(.6){~}="2"  \ar[d]^{\nabla^2} & A\\
&& A^2 \ar[ur]_{m}^(.6){~}="1" 
\ar@{=>}"1";"1"+/va(70) 3pc/^{m_2} 
\ar@{=>}"7";"7"+/va(70) 1.5pc/_(.3){(mA)_2} 
\ar@{=>}"5";"5"+/va(70) 3pc/|{Am_0m}
\ar@{=>}"13";"13"+/va(70) 2.5pc/|{mA\lambda A} 
\ar@{=>}"15";"15"+/va(70) 4pc/_{\alpha m} 
}$$
Finally by naturality this is equal to the diagram
$$\xymatrix @C4pc {
A^2 \ar[rr]^{A^2j} \ar[d]^{AjA} \ar@/_3pc/[ddd]_{1} &&
A^3 \ar[r]^{Am}_{~}="1" \ar[d]_{AjjA^2} & 
A^2  \ar[d]^{AjA} \\
A^3 \ar[rr]^{A^2jAj} \ar[dr]^{mA}_(.35){~}="10" \ar[d]_{A^2jA} &&
A^5 \ar[r]^{Amm}="2"_{~}="3" \ar[d]_{mA^3} &
A^3 \ar[d]^{mA}  \\
A^4 \ar[r]_{mm}="8"^(.3){~}="9" \ar[d]_{\nabla^2}  & 
A^2 \ar[r]^{AjAj}  \ar[d]^{\nabla} & 
A^4 \ar[r]^{mm}="4"_{~}="5" \ar[d]_{\nabla^2} & 
A^2 \ar[d]^{\nabla} \\
A^2 \ar[r]_{m}^{~}="7"   \ar[dr]_{A^2j} & A \ar[r]_{Aj} &
A^2 \ar[r]_{m}^{~}="6" & A . \\
& A^3 \ar[ur]_{mA} 
\ar@{=>}"2";"1"^{Am_0m} 
\ar@{=>}"4";"3"^{\alpha m} 
\ar@{=>}"6";"5"^{m_2} 
\ar@{=>}"7";"8"_{m_2}
\ar@{=>}"9";"10"_{m\lambda} 
}$$
and now \eqref{eq:2} clearly follows.

This now proves that $(1,\chi)$ defines a morphism 
of skew monoidales from $(A,m,j)$ to $(A,\nabla,j)$.

\begin{theorem}\label{thm:characterization}
The 2-cell $\chi$ defines the unit of a 2-adjunction $R\dashv T$ between 
the 2-category $\Mnd^*(\LBM)$ of opmonoidal monads on lax
braided monoidales, 
and the 2-category $\Skew(\LBM)$ of skew
monoidales in \LBM with unit $j$. The counit $RT\to 1$ is the
isomorphism described above. The image of $T$ consists
of those skew monoidales $(A,m,j)$ for which $\chi$ is 
invertible. 
\end{theorem}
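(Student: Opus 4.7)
The plan is to complete the verification of a 2-adjunction $R \dashv T$ by assembling three ingredients: (i) the 2-naturality of the putative unit $\eta$ with components $(1_A, \chi)$; (ii) the two triangle identities with $\eta$ and with counit the isomorphism $\epsilon\colon RT \cong 1$ already constructed; and (iii) the characterization of the essential image of $T$. The preceding work already shows that each component $(1_A, \chi)\colon (A, m, j) \to TR(A, m, j)$ is a morphism of skew monoidales, so the remaining task is really these three items.

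For (i), 2-naturality of $\chi$ under opmonoidal morphisms in $\LBM$ follows from the 2-naturality of the ingredients used to build $\psi$ --- namely $m_2$, $\lambda$, and the structural 2-cells of the tensor on $\M$ and the monoidale $(A,\nabla,j)$ --- together with the fact that the induced monad $t = m \cdot (1 \otimes j)$ and the restriction that defines $\chi$ from $\psi$ are preserved by morphisms whose 1-cell component is strong opmonoidal on the chosen unit $j$ (as is always the case in $\LBM$ when the unit is required to be $j$).

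For (ii), the triangle $\epsilon R \cdot R\eta = 1_R$ says that, at a skew monoidale $(A,m,j)$, whiskering $\chi$ on the right by $1 \otimes j$ (which is how $R$ acts on the morphism $(1,\chi)$) and then applying the counit isomorphism --- which identifies $\nabla \cdot (t \otimes 1) \cdot (1\otimes j)$ with $t$ via the right unit of $(A,\nabla,j)$ --- yields the identity on $t$. This is a direct consequence of the definition of $\chi$ as the restriction of $\psi$ along $1 \otimes j \otimes 1$ together with the unit axioms of the lax braided monoidale. The other triangle $T\epsilon \cdot \eta T = 1_T$, taken at a monad $(A,t)$, is an equation of morphisms in $\Skew(\LBM)$ whose underlying 1-cell is the identity, so it reduces to an equation of the associated structure 2-cells; one unpacks $\chi$ at $T(A,t)$, for which $m = \nabla \cdot (t \otimes 1)$, and verifies the equation using the monad axioms for $t$ and the monoidale axioms for $(A,\nabla,j)$, by the same style of pasting argument already used in Propositions~\ref{prop:psi-a}--\ref{prop:psi-chi}.

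For (iii), one direction is immediate: if $\chi$ is invertible then $(1,\chi)$ is an isomorphism $(A,m,j) \cong TR(A,m,j)$, exhibiting $(A,m,j)$ in the essential image of $T$. Conversely, if $(A,m,j) \cong T(A,t)$, then by the triangle identity $T\epsilon \cdot \eta T = 1$ and the invertibility of $\epsilon$, the component $\eta_{T(A,t)}$ is invertible, so after transport along the isomorphism $(A,m,j) \cong T(A,t)$, the 2-cell $\chi$ at $(A,m,j)$ is invertible too.

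The principal obstacle will be the clean dispatch of the second triangle identity, which is a pasting equation between several large diagrams involving $\psi$, $\chi$, $m_2$, $\alpha$ and the lax braided structure on $A$. However, the machinery is exactly that used in the verification that $(1,\chi)$ is compatible with the associativity constraint: once the defining composite of $\chi$ at $T(A,t)$ is expanded and Proposition~\ref{prop:psi-chi} is invoked to rewrite $\psi$ restricted along $1 \otimes j \otimes 1$, the remaining simplifications are controlled by naturality, opmonoidality of $\alpha$ and $\lambda$, the monad axioms for $t$, and the unit axioms of $(A,\nabla,j)$.
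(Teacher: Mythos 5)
Your proposal is correct and follows the paper's own strategy: the paper offers no separate proof of this theorem beyond the preceding constructions (the isomorphism $RT\cong 1$, the verification that $(1,\chi)$ is a morphism of skew monoidales, and Propositions~\ref{prop:psi-a}--\ref{prop:psi-chi}), leaving exactly the items you list --- 2-naturality, the two triangle identities, and the image characterization --- as routine verifications. Your sketches of these, including deducing invertibility of $\chi$ on the essential image of $T$ from the triangle identity and the invertibility of the counit, are the standard and intended arguments.
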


\begin{theorem}\label{thm:Aj-opmonadic}
In the context of the previous theorem, the restriction of $\chi$
along $Aj\colon A\to A^2$ is always invertible, so if restriction along
$Aj$ is conservative then the 2-adjunction $R\dashv T$ is 
in fact an equivalence. In particular this will be the case if 
$Aj$ is opmonadic. 
\end{theorem}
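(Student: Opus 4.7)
The argument has three parts.

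First, we show that $\chi \cdot (Aj)$ is invertible. Its source is $m \circ (1 \otimes j) = t$, and its target is $\nabla \circ (t \otimes 1) \circ (1 \otimes j)$, which via the interchange isomorphism and the right unit coherence $\nabla \circ (1 \otimes j) \cong 1$ of the lax braided monoidale $(A, \nabla, j)$ is canonically isomorphic to $t$ again. Since $\chi$ was defined as the restriction of $\psi$ along $1j1$ (modulo the unit coherence $(1\otimes\nabla) \circ (1j1) \cong 1$), $\chi \cdot (Aj)$ is likewise the restriction of $\psi$ along $1jj\colon A \to A^3$, modulo unit coherences. Recall that $\psi$ was built as the pasting of $m_2$ and the whiskering $m \otimes \lambda$; restricting each factor along $1jj$, the $\lambda$-component simplifies via $\lambda \cdot j$, which the opmonoidality of $\lambda$ identifies with $m_0$, while the $m_2$-component simplifies via $m_0$ and the coherence $\nabla \circ (j \otimes j) \cong j$. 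Pasting the resulting pieces together, $\chi \cdot (Aj)$ is identified with the canonical right-unit isomorphism $t \cong \nabla \circ (t \otimes j)$, and so is invertible.

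Second, once the invertibility of $\chi \cdot (Aj)$ is in hand, if restriction along $Aj$ reflects invertible 2-cells then $\chi$ itself is invertible, and by Theorem~\ref{thm:characterization} the skew monoidale $(A, m, j)$ is in the essential image of $T$. Every component of the unit of the 2-adjunction $R \dashv T$ is then invertible, and since the counit $RT \cong 1$ is already known to be invertible, we conclude that $R \dashv T$ is a 2-equivalence. The final clause then invokes a standard fact of formal category theory: an opmonadic 1-cell $Aj$ in $\LBM$ induces a conservative restriction functor on hom-categories, so opmonadicity of $Aj$ is a sufficient condition for the hypothesis just used to hold.

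The main obstacle is the explicit pasting-diagram check in the first step. Although each ingredient is already available --- the skew monoidale unit axioms, the opmonoidal constraints $m_2$ and $m_0$, and the monoidale coherences of $(A, \nabla, j)$ --- one must carefully track the $j$-insertions and combine them in the correct order; the calculation is in the same spirit as, but considerably shorter than, the pasting-diagram verifications preceding Theorem~\ref{thm:characterization}.
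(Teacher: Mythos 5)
Your argument is correct and follows essentially the same route as the paper's (very terse) proof: restrict the defining pasting of $\chi$ along $Aj$, use $\lambda\cdot j=m_0$, and observe that the resulting combination of $m_2$ with $m_0$ collapses --- this is precisely ``one of the counit laws for the opmonoidal structure on $m$'' that the paper cites --- leaving only coherence isomorphisms, so that $\chi\cdot(Aj)$ is invertible. The remaining deductions from conservativity and opmonadicity are routine and handled as intended.
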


\proof
Use the definition of $\chi$, the fact that $\lambda.j=m_0$, 
and one of the counit laws for the opmonoidal structure on $m$.
\endproof

\section{Quantum categories in the cartesian context}\label{qccc}
  
In this final section we turn to the question of which monoidal bicategories \M have the property that quantum categories in \M are just monads. 

The basic example of such an \M is the bicategory $\Span$
of sets and spans. The cartesian product of sets provides $\Span$ 
with a monoidal structure, although it is not a bicategorical 
product in $\Span$. This bicategory has been studied from 
many points of view; the relevant one here is that it is a
{\em cartesian bicategory} in the sense of \cite{CBII}.

The first property of cartesian bicategories that we use is that 
every left adjoint in a cartesian bicategory is opmonadic, and 
so in particular restriction along any left adjoint is conservative.
Thus the hypotheses of Theorem~\ref{thm:Aj-opmonadic} are
satisfied.

The other key property of a cartesian bicategory \M is that 
every object has a canonical symmetric monoidale structure,
with respect to which every morphism has symmetric opmonoidal 
structure,
and with respect to these, every 2-cell is opmonoidal. It follows
that the forgetful 2-functor 
$\LBM\to\M$ from the 2-category of lax braided monoidales in \M
is a biequivalence.

Combining the previous two theorems we now deduce:

\begin{theorem}
For a (strict) cartesian bicategory \M, the 2-category $\Mnd^*(\M)$
of monads in \M is biequivalent to the 2-category $\Skew(\M)$ of
left skew monoidales in \M with unit $I\to A$ given by the unique
map.
\end{theorem}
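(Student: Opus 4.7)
The plan is to chain three biequivalences:
\[
\Mnd^*(\M)\;\simeq\;\Mnd^*(\LBM)\;\simeq\;\Skew(\LBM)\;\simeq\;\Skew(\M).
\]
The outer two will come from the structural property of cartesian bicategories, recalled above, that the forgetful 2-functor $U\colon\LBM\to\M$ is a biequivalence---a consequence of the fact that every object of \M\ carries a canonical symmetric monoidale, every 1-cell a canonical opmonoidal structure, and every 2-cell is automatically opmonoidal. The middle biequivalence will come from Theorems~\ref{thm:characterization} and~\ref{thm:Aj-opmonadic}.

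First I would transport along $U$. For monads this is transparent because the definition of $\Mnd^*$ does not see the monoidal structure of the ambient bicategory, so $\Mnd^*(U)$ is immediately a biequivalence. For skew monoidales it requires the observation that a skew monoidale in \M\ with unit the canonical $j$ automatically lifts to a skew monoidale in $\LBM$, all the requisite opmonoidal data and coherences being canonical and essentially unique in a cartesian bicategory; under $U$ the canonical unit $j$ in $\LBM$ corresponds exactly to the ``unique map $I\to A$'' appearing in the statement. Second I would invoke Theorem~\ref{thm:characterization} to obtain a 2-adjunction $R\dashv T\colon\Mnd^*(\LBM)\to\Skew(\LBM)$ whose counit is already an isomorphism and whose unit is an isomorphism precisely on those skew monoidales for which the 2-cell $\chi$ is invertible. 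By Theorem~\ref{thm:Aj-opmonadic}, invertibility of $\chi$ is guaranteed as soon as $Aj\colon A\to A\otimes A$ is opmonadic; and by the cartesian-bicategory principle ``every left adjoint is opmonadic'' cited from \cite{CBII}, this holds automatically, upgrading the 2-adjunction to a biequivalence.

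The delicate point is not the cartesian-bicategory input but rather the correct matching of data across the biequivalence $\Skew(\M)\simeq\Skew(\LBM)$: namely, that every skew monoidale in \M\ with unit the unique $I\to A$ canonically lifts to $\LBM$, that the lift is essentially unique, and that morphisms and 2-cells match accordingly. Granted this---which is a routine verification using the universal property of the cartesian structure---the theorem follows by concatenating the three biequivalences displayed above.
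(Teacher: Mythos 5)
Your proposal is correct and follows essentially the same route as the paper: the paper likewise combines the biequivalence $\LB(\M)\to\M$ (from the canonical symmetric monoidale structures in a cartesian bicategory) with Theorems~\ref{thm:characterization} and~\ref{thm:Aj-opmonadic}, using the fact that every left adjoint in a cartesian bicategory is opmonadic to guarantee invertibility of $\chi$. Your explicit attention to transporting skew monoidales with the unique unit across the forgetful biequivalence is a point the paper leaves implicit, but it is the same argument.
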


The bicategory $\Span$ of spans of sets can be generalized to 
a bicategory $\Span(\CE)$ of spans in a finitely complete 
category $\CE$; taking $\CE$ to be the category of sets and functions, we recover $\Span$ itself.
The bicategory $\Span(\CE)$ is also cartesian, and so
in $\Span(\CE)$ once again quantum categories 
are just monads.

\end{document}